\newtheorem{prob}{Problem}[section]
\newtheorem{prop}[prob]{Proposition}
\newtheorem{theo}[prob]{Theorem}
\newtheorem{defi}[prob]{Definition}
\newtheorem{lem}[prob]{Lemma}
\begin{document}

\title{Perfect graphs of arbitrarily large clique-chromatic number} 


\author{Pierre Charbit\thanks{LIAFA - Universit\'e Paris Diderot - 
Paris 7 - Case 7014 - F-75205 Paris Cedex 13. Email: \texttt{pierre.charbit@liafa.univ-paris-diderot.fr}.}
\and Irena Penev \thanks{Universit\'e de Lyon,  LIP, UMR 5668, ENS Lyon - CNRS - 
UCBL - INRIA, France.
   Email: \texttt{irena.penev@ens-lyon.fr,nicolas.trotignon@ens-lyon.fr,stephan.thomasse@ens-lyon.fr}. 
Partially 
supported by the ANR Project \textsc{Stint} under \textsc{Contract ANR-13-BS02-0007} and by the LABEX MILYON (ANR-10-LABX-0070) of Universit\'e de
    Lyon, within the program ‘‘Investissements d'Avenir’’
    (ANR-11-IDEX-0007) operated by the French National Research Agency
    (ANR).}
\and St\'ephan Thomass\'e \footnotemark[2]
  \and Nicolas Trotignon \footnotemark[2]
}

\date{}
\maketitle

\begin{abstract} We prove that there exist perfect graphs of arbitrarily large clique-chromatic number. 
These graphs can be obtained from cobipartite graphs by repeatedly gluing along cliques. This negatively
answers a question raised by Duffus, Sands, Sauer, and Woodrow in~[Two-coloring all two-element maximal antichains, {\em J. Combinatorial Theory, Ser. A}, 57 (1991), 109--116].
\end{abstract} 

\section{Introduction}

All graphs in this paper are simple, finite, and non-null. 
A {\em clique} of a graph $G$ is a (possibly empty) set of pairwise adjacent vertices of $G$. A {\em clique-coloring} of a graph $G$ is an assignment of colors to the vertices of $G$ in such a way that no inclusion-wise maximal clique of size at least two of $G$ is monochromatic (as usual, a set of vertices is {\em monochromatic} if all vertices in the set received the same color). A {\em $k$-clique-coloring} of $G$ is a clique-coloring $\varphi:V(G) \rightarrow \{1,\ldots,k\}$ of $G$. $G$ is {\em $k$-clique-colorable} if there exists a $k$-clique-coloring of $G$. The {\em clique-chromatic number} of $G$, denoted by $\chi_C(G)$, is the smallest integer $k$ such that $G$ is $k$-clique-colorable. Note that every proper coloring of $G$ is also a clique-coloring of $G$, and so $\chi_C(G) \leq \chi(G)$. Furthermore, if $G$ is triangle-free, then $\chi_C(G) = \chi(G)$ (since there are triangle-free graphs of arbitrarily large chromatic number~\cite{mycielski:color, zykov}, this implies that there are triangle-free graphs of arbitrarily large clique-chromatic number). However, if $G$ contains triangles, $\chi_C(G)$ may be much smaller than $\chi(G)$. For instance, if $G$ contains a dominating vertex, then $\chi_C(G) \leq 2$ (we assign the color $1$ to the dominating vertex and the color $2$ to all other vertices of $G$), while $\chi(G)$ may be arbitrarily large. Note that this implies that the clique-chromatic number is not monotone with respect to induced subgraphs, that is, there exist graphs $H$ and $G$ such that $H$ is an induced subgraph of $G$, but $\chi_C(H) > \chi_C(G)$. (In particular, the restriction of a clique-coloring of $G$ to an induced subgraph $H$ of $G$ need not be a clique-coloring of $H$.) 

It was shown in~\cite{GravierHM03} that for any graph $H$, the class of graphs that do not contain $H$ as an induced subgraph has a bounded clique-chromatic number if and only if all components of $H$ are paths. The {\em clique number} of a graph $G$, denoted by $\omega(G)$, is the maximum size of a clique of $G$. A graph $G$ is {\em perfect} if all its induced subgraphs $H$ satisfy $\chi(H) = \omega(H)$. It was asked in \cite{DSSW} whether perfect graphs have a bounded clique-chromatic number. It has since been shown that graphs from many sublasses of the class of perfect graphs are $2$- or $3$-clique-colorable~\cite{tuza, bacso, DiamondFreePerfect, defossez, DSSW, MS, NoBalSkewPartCliqueCol}. There are well-known examples of perfect graphs of clique-chromatic number three (one example is the graph obtained from the cycle of length nine by choosing three evenly spaced vertices and adding edges between them), but until now, it was not known whether there were any perfect graphs of clique-chromatic number greater than three. The main result of the present paper is the following theorem. 
\begin{theo}\label{thm-main} 
There exist perfect graphs of arbitrarily large clique-chromatic number.
\end{theo}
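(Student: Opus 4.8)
The proof splits into a structural half (everything we build is perfect) and a combinatorial half (we can force the clique-chromatic number up).

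\textbf{Perfection.} Cobipartite graphs are complements of bipartite graphs; bipartite graphs are perfect, and by Lov\'asz's perfect graph theorem the complement of a perfect graph is perfect, so cobipartite graphs are perfect. Moreover, gluing two perfect graphs along a common clique produces a perfect graph: this is the classical fact that a graph admitting a clique cutset is perfect if and only if both sides (each taken together with the cutset) are perfect, which follows from the perfect graph theorem (or can be checked directly from $\chi = \omega$, using that a clique cutset cannot be crossed by an odd hole or an odd antihole). Hence every graph obtained from cobipartite graphs by repeatedly gluing along cliques is perfect, and it suffices to produce, for each $k$, such a graph $G_k$ with $\chi_C(G_k) > k$.

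\textbf{Reformulation and a relative notion.} Recall that $\chi_C(G)$ equals the weak chromatic number of the hypergraph $\mathcal{H}_G$ whose vertices are $V(G)$ and whose edges are the inclusion-wise maximal cliques of $G$ of size at least two; so we must realize hypergraphs of unbounded weak chromatic number as $\mathcal{H}_G$ for perfect graphs $G$ built by clique-gluing of cobipartite pieces. Because the restriction of a clique-coloring to an induced subgraph need not be a clique-coloring (as the introduction notes), the recursion is best organized around a relative notion: for a graph $G$ with a fixed clique $Q$, call a coloring a \emph{$Q$-clique-coloring} if no maximal clique of $G$ of size at least two that is \emph{not} contained in $Q$ is monochromatic. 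The point is that if $G$ is glued into a larger graph $G'$ along $Q$ (so that $Q$ becomes a clique cutset of $G'$), then every maximal clique of $G$ not contained in $Q$ survives as a maximal clique of $G'$, and consequently the restriction to $G$ of any clique-coloring of $G'$ is a $Q$-clique-coloring of $G$.

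\textbf{The recursive construction.} I would build $G_k$ together with a distinguished clique $Q_k$ so as to maintain an invariant of the shape: \emph{every $Q_k$-clique-coloring of $G_k$ that uses only few colors makes $Q_k$ monochromatic}. For the base case take $G_1$ to be a single cobipartite graph (for instance an edge, or a slightly larger gadget) for which the invariant holds trivially while $\chi_C(G_1) = 2$. For the step from $G_k$ to $G_{k+1}$: choose a cobipartite \emph{hub} $H$ together with a family $(C_i)_i$ of cliques inside $H$, and attach to $H$ one private copy of $G_k$ per index $i$ by identifying the copy's $Q_k$ with $C_i$; then set $Q_{k+1}$ to be a suitable clique inside $H$. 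The hub $H$ and the incidence pattern of the $C_i$ must be engineered so that (a) no ``accidental'' large maximal clique is created — typically by keeping the relevant hypergraph linear, i.e.\ arranging the $C_i$ like the blocks of a partial Steiner-type system, so that one controls exactly which sets are maximal cliques of $H$ and of $G_{k+1}$ — and (b) the local constraints transmitted by the copies of $G_k$ (each copy, through the invariant, forcing monochromaticity of its attaching clique $C_i$ as soon as few colors are available around it) cannot be reconciled with $k+1$ colors on $H$, because the $C_i$ have been placed according to a hypergraph of large weak chromatic number. This simultaneously yields a monochromatic maximal clique of $H$, contradicting $\chi_C(G_{k+1}) \le k+1$, and lets one re-establish the invariant for $Q_{k+1}$ at the next level. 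Managing the index arithmetic so that the available number of colors strictly increases at every level gives $\chi_C(G_k) \to \infty$, and combined with the first part this proves Theorem~\ref{thm-main}.

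\textbf{Where the difficulty lies.} Perfection is essentially automatic, so the real work is the recursive step: designing a cobipartite hub whose maximal-clique structure stays tame after many copies of $G_k$ have been glued on (requirement (a)), while the attachment pattern still encodes a robust hypergraph-coloring obstruction that does not degrade when one extra color becomes available (requirement (b)), and at the same time pinning down a precise invariant on the distinguished clique that is strong enough to propagate through the gluing yet weak enough to be re-established at each level. Reconciling tameness of the maximal cliques with the strength of the coloring obstruction — in effect, embedding a high-chromatic hypergraph into the maximal-clique hypergraph of a perfect graph without losing control of that hypergraph — is the crux of the argument.
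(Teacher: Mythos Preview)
Your proposal is an outline, not a proof: the entire technical content lives in the recursive step you describe in the last two paragraphs, and you explicitly leave that step undone. That alone would be a gap, but there is also a concrete problem with the invariant you propose. You want ``every $Q_k$-clique-coloring of $G_k$ that uses only few colors makes $Q_k$ monochromatic,'' and you plan to exploit this after gluing a copy of $G_k$ into $G_{k+1}$ along $C_i = Q_k$. But when you restrict a $(k+1)$-clique-coloring of $G_{k+1}$ to that copy, nothing prevents all $k+1$ colors from appearing on the copy, so your hypothesis ``uses only few colors'' is not available and the invariant tells you nothing about $C_i$. In other words, the number of colors appearing on a piece is not a quantity that the gluing controls, so an invariant keyed to that number cannot propagate. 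You would need an invariant that says something about \emph{every} $(k+1)$-coloring, not just those using at most $k$ colors.

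The paper's argument is organized differently and avoids this trap. It fixes once and for all a $(k+1)$-clique-coloring of the final graph $G_k$ and carries through the levels an invariant about the \emph{distribution} of colors rather than their \emph{number}: at level $i$ there exists an $n_i$-vertex clique on which exactly $i+1$ colors appear, each on exactly $n_i/(i+1)$ vertices (an ``$(i+1)$-uniform clique''). The base case is pigeonhole on a large complete graph. For the step, the construction attaches, to each $n_{i-1}$-clique $C$ of $H_{i-1}$, one new $n_i^2$-clique $X$ for \emph{every} bijection $\phi: C \to {[n_i^2] \choose n_i}$, wired so that $\{c\}\cup N_X(c)$ is a maximal clique of the whole graph for every $c\in C$. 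A probabilistic lemma then shows that some bijection $\phi$ has two properties simultaneously: (1) if any old color occupies $\ge 2n_i$ vertices of $X$ then some $c$ of that color has $N_X(c)$ entirely inside it, yielding a monochromatic maximal clique; and (2) any $\frac{n_i}{i+1}$-subset of $X$ is contained in the $X$-neighborhoods of at least $\frac{n_i}{i+1}$ vertices of each old color class. Together these force a new color to appear on at least $n_i/(i+1)$ vertices of $X$, and let one assemble an $(i+1)$-uniform clique inside $C\cup X$. This ``spread the colors'' invariant is exactly what survives restriction to a petal, which is why it propagates; your ``force monochromaticity'' invariant does not.
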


Thus, the question from~\cite{DSSW} mentioned above has a negative answer. We prove Theorem~\ref{thm-main} by exhibiting, for each integer $k \geq 2$, a perfect graph $G_k$ of clique-chromatic number $k+1$. The graph $G_k$ is obtained from cobipartite graphs (i.e. complements of bipartite graphs) by repeatedly applying the operation of gluing graphs along a clique. The fact that $G_k$ is perfect follows from the fact that cobipartite graphs are perfect, together with the fact that the operation of gluing along a clique preserves perfection (that is, if two perfect graphs are glued along a clique, then the resulting graph is also perfect). Note also that it is immediate from the construction that $G_k$ does not contain any induced cycle of length at least five; furthermore, $G_k$ does not contain the complement of any odd cycle of length at least five as an induced subgraph. 

A {\em hereditary class} is a class of graphs that is closed under taking induced subgraphs, and a {\em clique-cutset} of a graph is a (possibly empty) clique whose deletion from the graph yields a disconnected graph. It was asked in~\cite{NoBalSkewPartCliqueCol} whether, if $c$ is a positive integer and $\mathcal{G}$ is a hereditary class such that every graph in $\mathcal{G}$ is either $c$-clique-colorable or admits a clique-cutset, there must exist a positive integer $d$ such that every graph in $\mathcal{G}$ is $d$-clique-colorable. Our construction of the family $\{G_k\}_{k=2}^{\infty}$ implies that this question has a negative answer (even if we restrict our attention to the case when all graphs in the class $\mathcal{G}$ are perfect). Indeed, let $\mathcal{G}$ be the class of all induced subgraphs of the graphs $G_k$ (with $k \geq 2$). Then $\mathcal{G}$ is a hereditary class (each of whose members is a perfect graph), and every graph in $\mathcal{G}$ is either cobipartite (and therefore $2$-clique-colorable~\cite{NoBalSkewPartCliqueCol}) or admits a clique-cutset. However, $\mathcal{G}$ contains graphs of arbitrarily large clique-chromatic number (because $G_k \in \mathcal{G}$ and $\chi_C(G_k) = k+1$ for each $k \geq 2$).

\section{Proof of Theorem~\ref{thm-main}}

If $n$ is a positive integer, we denote by $[n]$ the set $\{1,2,\ldots,n\}$. When $X$ is a 
set and $n$ a non-negative integer, we denote by ${X \choose n}$ the set of all $n$-element 
subsets of $X$. A {\em cobipartite graph} is a graph whose complement is bipartite. A {\em bipartition} of 
a cobipartite graph is a partition $(A,B)$ of its vertex-set such that $A$ and $B$ 
are both (possibly empty) cliques. If $G$ is a graph, $v \in V(G)$, and $X \subseteq V(G) \smallsetminus \{v\}$, 
we say that $v$ is {\em complete} (resp. {\em anti-complete}) to $X$ in $G$ provided that $v$ is adjacent 
(resp. non-adjacent) to all vertices of $X$. For disjoint sets 
$A,B \subseteq V(G)$, we say that $A$ is {\em complete} (resp. {\em anti-complete}) 
to $B$ in $G$ provided that every vertex of $A$ is complete (resp. anti-complete) to $B$ in $G$. 

Here is the crucial gadget that we will use in our construction.  

\begin{defi}
Let $n$ and $k$ be positive integers such that $n \geq k$, let $N:={n\choose k}$, and let $G$ be a graph. 
\begin{itemize}
\item
Let $C$ be an $N$-vertex clique of $G$; say $C = \{c_1,\ldots,c_N\}$. (Note that the number of bijections between $C$ and ${[n] \choose k}$ is $N!$.) The {\em $(n,k)$-expansion of $G$ at $C$}, denoted by $G_C^{n,k}$, is the graph obtained from $G$ by adding $N!$ new cliques (each of size $n$, and each associated with a bijection from $C$ to ${[n] \choose k}$), pairwise anti-complete to each other. The clique associated with the bijection $\phi:C \rightarrow {[n] \choose k}$ is denoted by $X_C^\phi = \{x_1^\phi,\ldots,x_n^\phi\}$. There are no edges between $X_C^\phi$ and $V(G) \smallsetminus C$, and for all $i \in [N]$ and $j \in [n]$, $c_i$ is adjacent to $x_j^\phi$ if and only if $j \in \phi(i)$. The {\em petal} of $G_C^{n,k}$ associated with $\phi$ is the ordered pair $(C,X_C^\phi)$; clearly, $G_C^{n,k}[C \cup X_C^\phi]$ is a cobipartite graph with bipartition $(C,X_C^\phi)$. 
\item The {\em universal $(n,k)$-expansion} of $G$ is the graph obtained from $G$ by performing the $(n,k)$-expansion of $G$ at every $N$-vertex clique of $G$. A {\em petal} of the universal $(n,k)$-expansion of $G$ is a petal of an $(n,k)$-expansion of $G$ at some $N$-vertex clique of $G$. 
\end{itemize}
\end{defi}

Note that the $(n,k)$-expansion of a clique $C = \{c_1,\ldots,c_N\}$ can be understood as the creation of all possible enumerations of ${X \choose k}$ (with $X = \{x_1,\ldots,x_n\}$), where the vertex $c_i$'s neighborhood in $X$ is precisely the $i$-th $k$-element subset of $X$. We take all resulting cobipartite graphs, and we glue them along $C$; we then glue the resulting graph and the graph $G$ that we started with along the clique $C$, and we thus obtain the $(n,k)$-expansion of $G$ at $C$.  

It is well-known (and easy to prove) that the operation of gluing along a clique preserves perfection (that is, if two perfect graphs are glued along a clique, then the resulting graph is also perfect). It is also well-known that cobipartite graphs are perfect. Since the universal $(n,k)$-expansion of a graph $G$ can be obtained from $G$ by sequentially gluing cobipartite graphs along cliques of $G$, it is easy to see that if $G$ is perfect, then so is its universal $(n,k)$-expansion. We state this below for future reference. 

\begin{prop} \label{universal-perfect} Let $n$ and $k$ be positive integers such that $n \geq k$. Then universal $(n,k)$-expansions preserve perfection, that is, if $G$ is a perfect graph, then the universal $(n,k)$-expansion of $G$ is also perfect. 
\end{prop}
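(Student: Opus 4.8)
The plan is to reduce Proposition~\ref{universal-perfect} to two classical facts: (i) cobipartite graphs are perfect, and (ii) gluing two perfect graphs along a clique yields a perfect graph. The first is immediate from the Weak Perfect Graph Theorem (complements of perfect graphs are perfect, and bipartite graphs are trivially perfect), or can be checked directly. For the second, I would invoke the standard lemma: if $G_1$ and $G_2$ are perfect and $G_1 \cap G_2$ is a clique (in the sense that $G = G_1 \cup G_2$ with $V(G_1)\cap V(G_2)$ a clique and no edges between $V(G_1)\smallsetminus V(G_2)$ and $V(G_2)\smallsetminus V(G_1)$), then $G$ is perfect; this is well known and follows, for instance, from the Strong Perfect Graph Theorem or from a direct greedy-coloring argument on the clique-cutset decomposition.

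Given these, the core of the argument is purely structural: I would show that the universal $(n,k)$-expansion $\widehat{G}$ of $G$ is obtained from $G$ by a finite sequence of clique-gluings with cobipartite graphs. Concretely, fix an enumeration of all $N$-vertex cliques $C^{(1)},\ldots,C^{(m)}$ of $G$ (there are finitely many since $G$ is finite), and for each $C^{(t)}$ fix an enumeration of the $N!$ bijections $\phi$ onto ${[n]\choose k}$. Processing these one at a time, each step glues the current graph and the cobipartite graph on $C^{(t)}\cup X_{C^{(t)}}^{\phi}$ (with bipartition $(C^{(t)}, X_{C^{(t)}}^{\phi})$) along the clique $C^{(t)}$: this is legitimate because the new vertex set $X_{C^{(t)}}^{\phi}$ is anti-complete to everything outside $C^{(t)}$, so the intersection with the current graph is exactly the clique $C^{(t)}$ and there are no edges across. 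After all $m\cdot N!$ steps we obtain precisely $\widehat{G}$.

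Then I would finish by induction on the number of gluing steps: the base case is $G$ itself (perfect by hypothesis), and the inductive step applies fact (ii) with the current (perfect, by induction) graph and a cobipartite graph (perfect by fact (i)), so the result of the gluing is perfect. Since $\widehat{G}$ is the graph produced after the last step, $\widehat{G}$ is perfect, which is what we want.

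I expect the only real subtlety to be bookkeeping in the structural reduction: one must check carefully that at the moment the petal $(C^{(t)}, X_{C^{(t)}}^{\phi})$ is added, the attachment of $X_{C^{(t)}}^{\phi}$ to the current graph depends only on $C^{(t)}$ and matches the definition of the expansion — in particular that the previously added petals, being anti-complete to $V(G)\smallsetminus C^{(t')}$, do not interfere, and that $C^{(t)}$ remains a clique throughout (it does, since we never delete vertices or edges). None of this is deep, so the proof will be short; the ``hard part'' is really just making the sequential-gluing description precise enough to apply the clique-gluing lemma cleanly.
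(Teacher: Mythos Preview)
Your proposal is correct and follows exactly the same route as the paper: reduce to the two classical facts that cobipartite graphs are perfect and that gluing along a clique preserves perfection, then observe that the universal $(n,k)$-expansion is built from $G$ by sequentially gluing cobipartite petals along cliques. In fact the paper treats this proposition as essentially a remark---the argument is given informally in the paragraph preceding the statement rather than as a formal proof---so your writeup is, if anything, more detailed than what appears there.
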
 

From now on, $k \geq 2$ is a fixed integer. To prove Theorem~\ref{thm-main}, we construct a perfect graph $G_k$ of clique-chromatic number $k+1$. (Note that for the purposes of proving Theorem~\ref{thm-main}, it is enough to show that our perfect graph $G_k$ satisfies $\chi_C(G_{k+1}) \geq k+1$, but for the sake of completeness, we prove that equality holds.) Our graph $G_k$ is obtained from a large complete graph by applying universal expansions several times. (Since complete graphs are perfect, Proposition~\ref{universal-perfect} guarantees that the graph $G_k$ that we obtain is also perfect.) Formally, we define a sequence $\{n_i\}_{i=0}^k$ by setting 
\begin{itemize}
 \item $n_k=(k+1)!$
\item $n_{i-1}={n_i^2 \choose n_i}$ for each $i \in [k]$ 
\end{itemize}
Since $n_{i-1}={n_i^2 \choose n_i}=n_i {n_i^2-1 \choose n_i-1}$, we have that $n_{i}$ divides $n_{i-1}$, and consequently, $(k+1)!$ divides $n_i$ for all $i$; this will be of use later in the proof. We also observe that $\{n_i\}_{i=0}^k$ is a strictly decreasing sequence. 

Now, let $H_0$ be a complete graph on $(k+1)n_0$ vertices, and for each $i \in [k]$, let $H_i$ be the universal $(n_{i}^2,n_{i})$-expansion of $H_{i-1}$. Finally, let $G_k=H_{k}$. 

\begin{prop} \label{Gk-perfect} The graph $G_k$ is perfect. 
\end{prop}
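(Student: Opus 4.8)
The plan is to derive this immediately from Proposition~\ref{universal-perfect} by an induction on $i$, showing that each of $H_0,H_1,\ldots,H_k$ is perfect; since $G_k=H_k$, this gives the result.

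First I would handle the base case: $H_0$ is a complete graph, and complete graphs are perfect, since every induced subgraph of a complete graph is again a complete graph $K_m$, and $\chi(K_m)=m=\omega(K_m)$. Next, for the inductive step, suppose $H_{i-1}$ is perfect for some $i\in[k]$. Before invoking Proposition~\ref{universal-perfect} I would check its numerical hypothesis: each $n_i$ is a positive integer (it is either the factorial $n_k=(k+1)!$ or a binomial coefficient ${n_i^2\choose n_i}$ of positive integers), so $n_i^2\geq n_i\geq 1$, and hence the pair $(n,k):=(n_i^2,n_i)$ satisfies the required condition $n\geq k$. Since $H_i$ is by definition the universal $(n_i^2,n_i)$-expansion of $H_{i-1}$, Proposition~\ref{universal-perfect} applied to $H_{i-1}$ with this choice of $(n,k)$ yields that $H_i$ is perfect. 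By induction, $H_k$ is perfect, and as $G_k=H_k$, the proof is complete.

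I do not expect any genuine obstacle here: Proposition~\ref{universal-perfect} does all the work, and the only point requiring a word of justification is the trivial numerical fact $n_i^2\geq n_i$ needed to apply it. If one preferred a self-contained argument not citing Proposition~\ref{universal-perfect} as a black box, one would instead unfold its proof: a universal $(n,k)$-expansion of a graph is obtained from that graph by a finite sequence of operations, each of which glues a cobipartite graph (namely $G_C^{n,k}[C\cup X_C^\phi]$, which is perfect since cobipartite graphs are perfect) to the current graph along the clique $C$; since gluing two perfect graphs along a clique produces a perfect graph, an easy induction on the number of gluings shows perfection is preserved, and then the induction on $i$ above finishes the proof.
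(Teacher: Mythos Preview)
Your proposal is correct and follows exactly the same approach as the paper's own proof, which simply says the result is immediate from the construction of $G_k$ and Proposition~\ref{universal-perfect}. You have merely made the underlying induction on $i$ explicit and verified the trivial hypothesis $n_i^2 \geq n_i$ needed to invoke that proposition.
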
 
\begin{proof} 
This is immediate from the construction of $G_k$ and Proposition~\ref{universal-perfect}. 
\end{proof}

Our goal is to show that $\chi_C(G_k) = k+1$. We first show that $\chi_C(G_k) \leq k+1$ by exhibiting a $(k+1)$-clique-coloring of $G_k$ (see Proposition~\ref{prop-leq-k+1}). We then show that $\chi_C(G_k) = k+1$ by proving that every $(k+1)$-clique-coloring of $G_k$ uses all $k+1$ colors, and consequently, no $k$-clique-coloring of $G_k$ exists (see Proposition~\ref{prop-geq-k+1}). 

\begin{prop} \label{prop-leq-k+1} $G_k$ is $(k+1)$-clique-colorable. 
\end{prop}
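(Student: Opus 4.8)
The plan is to produce an explicit $(k+1)$-clique-coloring of $G_k=H_k$ by coloring the vertices of each layer $H_i$ one layer at a time, starting from the complete graph $H_0$. Since $|V(H_0)|=(k+1)n_0$ and $(k+1)!$ divides $n_0$ (hence $k+1$ divides $n_0$), I can partition $V(H_0)$ into $k+1$ colour classes each of size $n_0$; call this colouring $\varphi_0$. The key point is that in $H_0$ (a clique) the only maximal clique is $V(H_0)$ itself, which has $k+1\ge 2$ vertices and is not monochromatic under $\varphi_0$, so $\varphi_0$ is a clique-colouring of $H_0$. I would then extend $\varphi_i$ to $\varphi_{i+1}$ on $H_{i+1}$, which is obtained from $H_i$ by universal $(n_{i+1}^2,n_{i+1})$-expansion: every new petal $(C,X_C^\phi)$ contributes a clique $X_C^\phi$ of size $n_{i+1}$ on fresh vertices, and the new maximal cliques are exactly these $X_C^\phi$ together with sets of the form $S\cup\{x_j^\phi\}$ where $S\subseteq C$ and $x_j^\phi$ is complete to $S$ (and $X_C^\phi\cup\{c_i : \phi(i)\ni j\text{ for all }j\in\dots\}$-type unions — more precisely, maximal cliques of a cobipartite graph glued along $C$ are of the form $Y\cup Z$ with $Y\subseteq C$, $Z\subseteq X_C^\phi$, $Y$ complete to $Z$, and maximal). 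So I must colour each $X_C^\phi$ so that none of these cliques becomes monochromatic.

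The heart of the argument is a local colouring lemma for a single petal. Fix a petal $(C,X)$ with $|C|=N=\binom{n_{i+1}^2}{n_{i+1}}=n_i$ and $|X|=n:=n_{i+1}$, where the colouring $\varphi_i$ on $C$ is already fixed; I claim I can colour $X$ with the $k+1$ colours so that (a) $X$ itself is non-monochromatic, and (b) for every nonempty $Y\subseteq C$ and every $x\in X$ complete to $Y$, the set $Y\cup\{x\}$ is non-monochromatic, and more generally every maximal clique meeting both $C$ and $X$ is non-monochromatic. Condition (b) is automatically satisfied whenever $Y$ is already non-monochromatic under $\varphi_i$; the only danger is a monochromatic $Y\subseteq C$ of some colour, say colour $c$, together with an $x\in X$ of colour $c$ that is complete to $Y$. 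Recall $x=x_j^\phi$ is complete to exactly those $c_i\in C$ with $j\in\phi(i)$; since $\phi$ ranges over all bijections $C\to\binom{[n]}{k}$, the neighbourhoods in $X$ realized by the vertices of $C$ are all of $\binom{[n]}{k}$ — that is, for every $k$-subset $T$ of $[n]$ there is exactly one $c_i$ with $\phi(i)=T$. I would exploit this together with the inductive structure: because each colour class of $\varphi_i$ within a clique has controlled size (the induction hypothesis should record that within every $N$-clique $C$ of $H_i$ no colour class exceeds some bound, or equivalently every large monochromatic subclique is ``small'' relative to $N$), I can choose the colouring of $X$ — really, choose which $j\in[n]$ gets which colour — so that for each colour $c$, the set of indices $j$ with colour $c$ is not entirely contained in any $\phi(i)$ for a $c_i$ of colour $c$. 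Since $|\phi(i)|=k < n$ and there are at most (size bound) monochromatic-threatening $c_i$'s, a counting/pigeonhole argument over the $\binom{n}{k}$ choices of $k$-sets shows such a colouring of $[n]$ exists; this is where the bound $n_k=(k+1)!$ and the rapid growth $n_{i-1}=\binom{n_i^2}{n_i}$ are used, to guarantee enough room.

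The main obstacle is bookkeeping the right induction hypothesis: I need an invariant on $\varphi_i$ strong enough that, at each petal, the fixed colouring on $C$ never forces a conflict I cannot resolve on $X$, yet weak enough to be re-established on $H_{i+1}$. A natural candidate is: ``$\varphi_i$ is a $(k+1)$-clique-colouring of $H_i$ in which every $N$-vertex clique $C$ of $H_i$ has all $k+1$ colours appearing, and moreover no monochromatic clique has size $\ge n_{i+1}$'' (or some similar explicit threshold). I would verify the base case $i=0$ directly (each colour class of $\varphi_0$ is an independent-in-complement set of size $n_0$, and $n_0\ge n_1$ with all $k+1$ colours present), then carry out the petal lemma to extend to $\varphi_{i+1}$ and re-check the invariant, using that the new cliques $X_C^\phi$ have size exactly $n_{i+1}$ and are made non-monochromatic. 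Once the induction reaches $i=k$, $\varphi_k$ is the desired $(k+1)$-clique-colouring of $G_k=H_k$, which proves the proposition. I expect the delicate point to be the precise quantitative form of the invariant and checking that the petal lemma's counting goes through at the last layer, where $N=n_{k-1}$ is large but $n=n_k=(k+1)!$ is comparatively small; the generous factorial and binomial growth in the definition of the $n_i$ is exactly what makes this work.
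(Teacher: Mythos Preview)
Your plan contains a basic factual error about the construction: in a petal $(C, X_C^\phi)$ arising from the universal $(n_{i+1}^2, n_{i+1})$-expansion of $H_i$, the new clique $X_C^\phi$ has size $n_{i+1}^2$, not $n_{i+1}$, and each vertex of $C$ has exactly $n_{i+1}$ neighbours in $X_C^\phi$ (not $k$; you seem to have conflated the fixed parameter $k$ of $G_k$ with the second parameter of the $(n,k)$-expansion, which here equals $n_{i+1}$). This changes the combinatorics of your proposed ``petal lemma'' substantially, since the objects you must colour are much larger than you assume.

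More importantly, your plan is far more elaborate than what the upper bound requires, and the invariant you gesture at is both vague and not shown to be maintainable. The paper's proof takes a much simpler route and does not attempt to use all $k+1$ colours from the outset. Instead it colours $H_0$ with just two colours (one vertex gets colour $1$, the rest colour $2$), extends this by explicit ad hoc rules to a $3$-clique-colouring of $H_1$ and then of $H_2$, and from there proceeds inductively: for each $i\geq 2$, an $(i+1)$-clique-colouring of $H_i$ is extended to an $(i+2)$-clique-colouring of $H_{i+1}$ by the trivial rule that in each new clique $X$ one picks a single vertex $x$, gives it colour $1$ or $2$ (colour $2$ if all of $x$'s neighbours in $H_i$ have colour $1$, otherwise colour $1$), and gives every other vertex of $X$ the fresh colour $i+2$. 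The verification is then immediate: any maximal clique of $G_k$ meeting $X$ either contains a vertex of $H_i$ (colour at most $i+1$) and a vertex of $X\smallsetminus\{x\}$ (colour $i+2$), or is handled by the choice of colour for $x$. No invariant about balanced colour classes or bounds on monochromatic subcliques is needed at all.

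Your approach of balancing colours on $H_0$ and propagating a structural invariant might in principle be pushed through, but as written it is only a sketch: the ``petal lemma'' is not proved, the invariant is not pinned down, and there is no argument that the counting closes at each layer (in particular at the last one where $n_k=(k+1)!$). Given that the paper's argument occupies a short paragraph, you should abandon this line and adopt the simpler incremental-colour strategy.
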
 
\begin{proof} 
We proceed as follows: we exhibit a $2$-clique-coloring of $H_0$, we then extend that to a $3$-clique-coloring of $H_1$, and then we extend that to a $3$-clique-coloring of $H_2$. If $k = 2$, then we are done, and otherwise, we show that for each $i \in \{2,\ldots,k-1\}$, any $(i+1)$-clique-coloring of $H_i$ can be extended to an $(i+2)$-clique-coloring of $H_{i+1}$. 

First, assign the color $1$ to a single vertex (call it $x_0$) of the complete graph $H_0$, and assign the color $2$ to all other vertices of $H_0$; clearly, this is a $2$-clique-coloring of $H_0$. Next, for each component of $H_1 \smallsetminus V(H_0)$, choose a vertex that is non-adjacent to $x_0$ and color it $1$ (such a vertex exists because no vertex of $H_0$ has more than $n_1$ neighbors in any component of $H_1 \smallsetminus V(H_0)$, and each component of $H_1 \smallsetminus V(H_0)$ is a clique of size $n_1^2 > n_1$), and assign the color $3$ to all other vertices of the component. Clearly, this is a $3$-clique-coloring of $H_1$. Furthermore, note that the set of all vertices of $H_1$ colored $1$ is a stable set (and so no clique of $H_1$ contains more than one vertex colored $1$). 

We now extend our $3$-clique-coloring of $H_1$ to a $3$-clique-coloring of $H_2$. Let $X$ be the vertex-set of a component of $H_2 \smallsetminus V(H_1)$ (thus, $X$ is a clique of size $n_2^2$ of $H_2$), and let $C$ be the set of all vertices of $H_1$ that have a neighbor in $X$; thus, $(C,X)$ is a petal of $H_2$. In particular, $C$ is a clique of $H_1$, and consequently, at most one vertex of $C$ was assigned the color $1$. If exactly one vertex of $C$ was colored $1$, then let $x \in X$ be some neighbor of that vertex, and otherwise, let $x$ be any vertex of $X$. If all neighbors of $x$ in $C$ were colored $2$, assign to $x$ the color $3$, and otherwise, assign to $x$ the color $2$. Finally, assign the color $1$ to all vertices in $X \smallsetminus \{x\}$. We do this for each component of $H_2 \smallsetminus V(H_1)$, and the coloring of $H_2$ that we obtain is clearly a $3$-clique-coloring of $H_2$. If $k = 2$, then we are done. So suppose that $k \geq 3$.

Now, fix $i \in \{2,...,k-1\}$, and assume inductively that we have $(i+1)$-clique-colored $H_i$. We must $(i+2)$-clique-color $H_{i+1}$. Let $X$ be the vertex-set of a component of $H_{i+1} \smallsetminus V(H_i)$; we color $X$ as follows. We first pick any vertex $x$ of $X$, and if all neighbors of $x$ in $H_i$ (note that these vertices form a clique) were colored $1$, then we assign to $x$ the color $2$; otherwise, we assign to $x$ the color $1$. To all other vertices of $X$, we assign the color $i+2$. We do this for all components of $H_{i+1} \smallsetminus V(H_i)$, and we thus obtain an $(i+2)$-clique-coloring of $H_{i+1}$. This completes the induction, and it follows that $G_k = H_k$ is $(k+1)$-clique-colorable. 
\end{proof} 

We now need some notation. If $v \in V(G_k)$, we denote by $N(v)$ the set of all neighbors of $v$ in $G_k$ (in particular, $v \notin N(v)$). If $v \in V(G_k)$ and $X \subseteq V(G_k)$, we denote by $N_X(v)$ the set of all neighbors of $v$ in $X$, that is, $N_X(v) = N(v) \cap X$ (here, $v$ may or may not belong to $X$, but $v \notin N_X(v)$). The following proposition is easy but crucial for what follows. 

\begin{prop} \label{prop-max-clique} 
Let $i \in \{0,\ldots,k-1\}$, let $C$ be an $n_i$-vertex clique of $H_i$ (recall that $n_i = {n_{i+1}^2 \choose n_{i+1}}$), and let $c \in C$. For every petal $(C,X)$ in $H_{i+1}$, $\{c\} \cup N_X(c)$ is a maximal clique (of size $n_{i+1}+1$) of $G_k$. 
\end{prop}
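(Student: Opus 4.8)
The plan is to verify two things: first, that $\{c\} \cup N_X(c)$ is a clique of size $n_{i+1}+1$, and second, that it is inclusion-wise maximal. For the size, recall from the definition of the $(n_{i+1}^2, n_{i+1})$-expansion that the petal $(C,X)$ arises from some bijection $\phi : C \to \binom{[n_{i+1}^2]}{n_{i+1}}$ with $X = \{x_1^\phi, \ldots, x_{n_{i+1}^2}^\phi\}$, and $c$ (being equal to some $c_\ell \in C$) is adjacent to exactly those $x_j^\phi$ with $j \in \phi(\ell)$. Since $|\phi(\ell)| = n_{i+1}$, we get $|N_X(c)| = n_{i+1}$, and since $X$ is a clique and $c$ is complete to $N_X(c)$, the set $\{c\} \cup N_X(c)$ is indeed a clique of size $n_{i+1}+1$.

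For maximality, I would take an arbitrary vertex $v \in V(G_k) \setminus (\{c\} \cup N_X(c))$ and exhibit a vertex of $\{c\} \cup N_X(c)$ non-adjacent to $v$. The key structural fact to invoke is that in a universal expansion, the only edges incident to a petal-vertex $x_j^\phi$ go to other vertices of the same clique $X$ and to the vertices $c_i \in C$ with $j \in \phi(i)$; in particular $X$ is anti-complete to $V(H_i) \setminus C$, and distinct petal-cliques are anti-complete to one another. I would split into cases according to where $v$ lies. If $v \in X \setminus N_X(c)$, then $v$ is non-adjacent to $c$ by definition of $N_X(c)$, and we are done. If $v$ lies in some other petal-clique $X'$ (of the same or a later expansion), or in $V(H_i) \setminus C$, or in a petal-clique attached to a clique of $H_i$ other than $C$, then $v$ is anti-complete to $X$, hence non-adjacent to any vertex of $N_X(c)$ (which is nonempty), so again we are done. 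The remaining case is $v \in C \setminus \{c\}$: here I use that $\phi$ is a \emph{bijection}, so the $n_{i+1}$-sets $\phi(\ell)$ and $\phi(\ell')$ assigned to $c = c_\ell$ and $v = c_{\ell'}$ are distinct; since they have the same size, neither contains the other, so there is an index $j \in \phi(\ell) \setminus \phi(\ell')$, and the corresponding vertex $x_j^\phi \in N_X(c)$ is non-adjacent to $v$.

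The one genuinely substantive point — and the main obstacle — is this last case: maximality hinges on the fact that no two vertices of $C$ have nested neighbourhoods inside $X$, which is exactly what the bijective labelling by $k$-subsets of a fixed size was engineered to guarantee. (This is also why the expansion uses $\binom{n}{k}$-vertex cliques and all of $\binom{[n]}{k}$: every vertex of $C$ gets a distinct $n_{i+1}$-subset of $X$ as its neighbourhood, and no such subset is contained in another.) Everything else is bookkeeping about which vertices of $G_k$ can see a vertex of $X$, which follows directly from unwinding the definition of universal expansion and the fact that $G_k$ is built by iterating this operation starting from a complete graph.
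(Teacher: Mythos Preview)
Your argument for maximality within $H_{i+1}$ is fine, and in particular the key observation for the case $v \in C \setminus \{c\}$ --- that distinct vertices of $C$ have distinct, equal-sized neighbourhoods in $X$, hence non-nested --- is exactly the point the paper makes.

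The gap is in your treatment of vertices $v \in V(G_k) \setminus V(H_{i+1})$. You assert that if $v$ lies in a petal-clique of a \emph{later} expansion, then $v$ is anti-complete to $X$. This is false. Recall that $H_{i+2}$ is the universal $(n_{i+2}^2,n_{i+2})$-expansion of $H_{i+1}$, so it attaches petals at every $n_{i+1}$-vertex clique of $H_{i+1}$. But $N_X(c)$ is precisely such a clique (it has size $n_{i+1}$ and sits inside $H_{i+1}$), so in $H_{i+2}$ there are petals $(N_X(c),X')$, and every vertex of $X'$ has neighbours in $N_X(c) \subseteq X$. More generally, later petal-cliques can be attached to cliques that meet $X$, so ``anti-complete to $X$'' simply does not hold for them. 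Your ``bookkeeping'' remark underestimates this case.

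What rescues the proof is a degree count, not anti-completeness: for any $v \in V(G_k)\setminus V(H_{i+1})$, take the least $j \geq i+1$ with $v \in V(H_{j+1})$, and let $(C',X')$ be the petal of $H_{j+1}$ with $v \in X'$. Then all neighbours of $v$ in $V(H_{i+1})$ lie in $N_{C'}(v)$, and $|N_{C'}(v)| = \binom{n_{j+1}^2-1}{n_{j+1}-1} < \binom{n_{j+1}^2}{n_{j+1}} = n_j \leq n_{i+1}$. Since $\{c\}\cup N_X(c) \subseteq V(H_{i+1})$ has size $n_{i+1}+1$, the vertex $v$ cannot be complete to it. This is the argument the paper gives, and it is genuinely needed; it is not just unwinding definitions.
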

\begin{proof} 
By definition, $\{c\}\cup N_X(c)$ is a clique of size $n_{i+1}+1$ of $H_{i+1}$ (and therefore of $G_k$ as well), and we just need to show that the clique $\{c\} \cup N_X(c)$ is maximal in $G_k$. We first show that it is maximal in $H_{i+1}$. We know that no vertex in $V(H_{i+1}) \smallsetminus (C \cup X)$ has a neighbor in $X$, and so we just need to show that no vertex in $C \smallsetminus \{c\}$ is complete to $N_X(c)$. But this follows immediately from the fact that vertices in $C$ have pairwise distinct neighborhoods in $X$, and all these neighborhoods are of equal size. Thus, $\{c\} \cup N_X(c)$ is a maximal clique of $H_{i+1}$. 

To show that $\{c\} \cup N_X(c)$ is a maximal clique of $G_k$, it now suffices to show that no vertex 
in $V(G_k) \smallsetminus V(H_{i+1})$ has more than $n_{i+1}$ neighbors in $V(H_{i+1})$ (this is sufficient 
because $\{c\} \cup N_X(c) \subseteq V(H_{i+1})$ and $|\{c\} \cup N_X(c)| = n_{i+1}+1$). If $i = k-1$, then 
this is immediate, and so we suppose that $i \leq k-2$. Fix $v \in V(G_k) \smallsetminus V(H_{i+1})$, and 
let $j \in \{i+1,\ldots,k-1\}$ be minimal such that $v \in V(H_{j+1})$. Fix a petal $(C',X')$ of $H_{j+1}$ 
such that $v \in X'$. Since $|X'| = n_{j+1}^2$, the number of $n_{j+1}$-element subsets of $X'$ that contain 
$v$ is precisely ${n_{j+1}^2-1 \choose n_{j+1}-1}$, and consequently, 
$|N_{C'}(v)| = {n_{j+1}^2-1 \choose n_{j+1}-1} \leq {n_{j+1}^2 \choose n_{j+1}} = n_j \leq n_{i+1}$. 
Since $N_{V(H_{i+1})}(v) \subseteq N_{V(H_j)}(v) = N_{C'}(v)$, it follows that $v$ has at most $n_{i+1}$ 
neighbors in $H_{i+1}$, which is what we needed to show. 
\end{proof} 

From now on, we fix a $(k+1)$-clique-coloring of $G_k$. (Note that Proposition~\ref{prop-leq-k+1} implies that at least one such clique-coloring exists.) To show that $\chi_C(G_k) = k+1$, it suffices to show that our $(k+1)$-clique-coloring of $G_k$ uses all $k+1$ colors, and so no $k$-clique-coloring of $G_k$ exists. In fact, we prove something stronger. Given $i \in \{0,\ldots,k\}$, an {\em $(i+1)$-uniform clique} of $H_i$ is an $n_i$-vertex clique $C$ of $H_i$ such that our $(k+1)$-clique-coloring of $G_k$ uses exactly $i+1$ colors on $C$, and furthermore, each of those $i+1$ colors is used on precisely $\frac{n_i}{i+1}$ vertices of $C$. (We remind the reader that $(k+1)!$ divides $n_i$, and so $\frac{n_i}{i+1}$ is an integer.) Our goal is to prove the following proposition, which immediately implies that $\chi_C(G_k) = k+1$. 

\begin{prop} \label{prop-geq-k+1} For every $i \in \{0,\ldots,k\}$, there is an $(i+1)$-uniform clique of $H_i$. 
\end{prop}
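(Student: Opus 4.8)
The base case $i=0$ asks for a $1$-uniform clique of $H_0$: since $H_0$ is a complete graph on $(k+1)n_0$ vertices and our clique-coloring must not make the single maximal clique $V(H_0)$ monochromatic, at least two colors appear on $V(H_0)$; a pigeonhole/averaging argument then lets us pick $n_0$ vertices all of the same color (indeed at least $n_0$ vertices share some color, since $(k+1)n_0$ vertices get at most $k+1$ colors), giving a $1$-uniform clique. Actually for the base case we only need a monochromatic $n_0$-clique, which is immediate.

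**For the inductive step, suppose $C$ is an $(i+1)$-uniform clique of $H_i$, and we want an $(i+2)$-uniform clique of $H_{i+1}$.** The natural idea is to look at the petals $(C,X)$ attached to $C$ in $H_{i+1}$. We have a bijection $\phi\colon C \to \binom{[n_{i+1}^2]}{n_{i+1}}$ realized by one such petal, where $c_j$'s neighborhood in $X=\{x_1,\dots,x_{n_{i+1}^2}\}$ is $\phi(c_j)$. By Proposition~\ref{prop-max-clique}, for each $c \in C$ the set $\{c\}\cup N_X(c)$ is a maximal clique of $G_k$, so it is not monochromatic: the color of $c$ must differ from the color of at least one $x_\ell$ with $\ell \in \phi(c)$, i.e., at least one neighbor of $c$ in $X$ has a color different from $c$'s color. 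We want to choose the petal (equivalently, the bijection $\phi$) cleverly — or rather, exploit that \emph{all} bijections occur as petals — so that we can extract from $X$ an $n_{i+1}$-vertex clique $C'$ on which exactly $i+2$ colors appear, each $\frac{n_{i+1}}{i+2}$ times. The key leverage is that $|X| = n_{i+1}^2$ is huge compared to $n_{i+1}$, and that $C$ already sees, in a uniform way, all $i+1$ colors.

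**The heart of the argument should be a counting/pigeonhole step inside a single well-chosen petal, combined with the structure of $C$.** Here is the mechanism I expect to use. Fix a petal $(C,X)$. The colors $1,\dots,k+1$ partition $X$; let $\kappa$ be the number of colors appearing on $X$. If some color class in $X$ has size $\geq n_{i+1}$, and we could also guarantee that the remaining colors on $X$ let us balance things out — this is where it gets delicate. The cleaner route: since $C$ is $(i+1)$-uniform, there are $\frac{n_i}{i+1}$ vertices of $C$ in each of $i+1$ colors, say colors $1,\dots,i+1$. For each vertex $x_\ell \in X$, its non-neighbors in $C$ are exactly $\{c_j : \ell \notin \phi(c_j)\}$, which has size $n_i - \binom{n_{i+1}^2-1}{n_{i+1}-1} = n_i\bigl(1 - \tfrac{1}{n_{i+1}}\bigr)$, i.e., most of $C$. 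The maximal-clique constraint from Proposition~\ref{prop-max-clique} says: for every $c_j \in C$, not all of $\phi(c_j)$ receives $c_j$'s color. Since all bijections $\phi$ appear as petals, we should be able to argue: if in \emph{every} petal every $n_{i+1}$-subset $C'$ of $X$ used at most $i+1$ colors non-uniformly, we'd derive a contradiction with some maximal clique being monochromatic. More concretely, one picks the $n_{i+1}$-clique $C' \subseteq X$ to consist of a balanced mix — $\frac{n_{i+1}}{i+2}$ vertices each of colors $1,\dots,i+1$, plus $\frac{n_{i+1}}{i+2}$ vertices of a ``new'' color — provided such vertices exist in $X$; existence of a new color (or enough of the old ones) in $X$ is forced because otherwise some maximal clique spanning part of $C$ and part of $X$ would be monochromatic, contradicting Proposition~\ref{prop-max-clique}.

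**I expect the main obstacle to be making the selection of $C'$ inside $X$ simultaneously a clique (automatic, since $X$ is a clique), of the right size $n_{i+1}$, and \emph{uniformly} $(i+2)$-colored, while rigorously forcing that $i+2$ distinct colors with adequate multiplicities are available.** The uniformity requirement (each color exactly $\frac{n_{i+1}}{i+2}$ times) is what forces the elaborate divisibility setup ($(k+1)!\mid n_i$) and the doubly-exponential growth ($n_{i-1} = \binom{n_i^2}{n_i}$): the growth ensures $X$ is large enough that, after the maximal-clique constraints eliminate ``bad'' configurations, a balanced subclique still fits. The crux is a careful pigeonhole: among the $n_{i+1}^2$ vertices of $X$, argue that at least $i+2$ colors appear with multiplicity at least $\frac{n_{i+1}}{i+2}$ each — the ``$i+1$ old colors plus one new'' being guaranteed by combining (a) $C$ is $(i+1)$-uniform and attached to $X$ via a bijection hitting all subsets, so each old color has many representatives adjacent to chosen $X$-vertices, and (b) Proposition~\ref{prop-max-clique} rules out $X$ (or large chunks of it together with a $c\in C$) being monochromatic in an old color. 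Then pick $\frac{n_{i+1}}{i+2}$ vertices of each of these $i+2$ colors to form $C'$, and verify $C'$ is $(i+2)$-uniform, completing the induction and hence showing $\chi_C(G_k) = k+1$.
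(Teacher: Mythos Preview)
Your base case is fine, but the inductive step has a genuine gap. You propose to find the $(i+2)$-uniform clique entirely inside $X$, by arguing that ``at least $i+2$ colors appear [on $X$] with multiplicity at least $\frac{n_{i+1}}{i+2}$ each.'' This cannot be forced. The only local constraint coming from Proposition~\ref{prop-max-clique} is that for each $c\in C$ the set $\{c\}\cup N_X(c)$ is not monochromatic; as you yourself note under~(b), this yields an \emph{upper} bound on how many vertices of $X$ can carry any given old color (if too many do, some $c$ of that color has its whole $X$-neighbourhood inside that color class). It yields no \emph{lower} bound. Nothing in your argument prevents the clique-coloring from using only colors from $\{i+2,\dots,k+1\}$ on all of $X$; then none of the old colors $1,\dots,i+1$ appear in $X$, and the balanced subclique you seek does not exist. (Indeed, the explicit $(k+1)$-clique-coloring built in Proposition~\ref{prop-leq-k+1} does essentially this: for $i\ge 2$ it colors all but one vertex of each newly added $X$ with the single fresh color $i+2$.)

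The paper's proof resolves this by assembling the $(i+2)$-uniform clique from \emph{both} sides of the petal: it picks a set $B\subseteq X$ of $\frac{n_{i+1}}{i+2}$ vertices of one new color, and for each old color $j\in[i+1]$ it picks $\frac{n_{i+1}}{i+2}$ vertices from $C_j\subseteq C$. The difficulty then shifts to making the union a clique, i.e.\ choosing the $C_j$-vertices to be complete to $B$. This is exactly the role of Lemma~\ref{lem:proba}: it guarantees a bijection $\phi$ (hence a petal, since all bijections are realised) such that (1) every $2n_{i+1}$-subset of $X$ contains some full neighbourhood $N_X(c)$ with $c\in C_j$, which caps each old color in $X$ below $2n_{i+1}$ and thereby forces a new color of multiplicity at least $\frac{n_{i+1}}{i+2}$; and (2) every $\frac{n_{i+1}}{i+2}$-subset $B$ of $X$ lies inside at least $\frac{n_{i+1}}{i+2}$ of the neighbourhoods $N_X(c)$ with $c\in C_j$, for each $j$. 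Property~(2) is the idea your sketch is missing; without it (or an equivalent device for gluing $B$ back to $C$) the induction does not close.
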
 
The main ingredient of the proof of Proposition~\ref{prop-geq-k+1} is the following lemma, whose (probabilistic) proof we postpone to the end of this section. 

\begin{lem}
\label{lem:proba}
 Let $i$ and $n$ be positive integers such that $i(i+1)$ divides $n$. Let $C$ be a set of size ${n^2 \choose n}$, and let $(C_1,\ldots,C_i)$ be a partition of $C$ into $i$ equal-sized subsets. Then there exists a bijection $\phi:C \rightarrow {[n^2] \choose n}$ such that:
\begin{itemize}
\item[(1)] for any $j\in [i]$ and any $A \in {[n^2] \choose 2n}$, some member of $\phi[C_j]$ is a subset of $A$.
\item[(2)] for any $j\in [i]$ and any $B\in {[n^2] \choose n/(i+1)}$, $B$ is a subset of at least $\frac{n}{i+1}$ members of $\phi[C_j]$. 

 
\end{itemize}
\end{lem}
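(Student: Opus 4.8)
The plan is to choose the bijection $\phi\colon C\to{[n^2]\choose n}$ uniformly at random among all $N!$ bijections, where $N:={n^2\choose n}$, and to show it satisfies (1) and (2) with positive probability. Write $b:=n/(i+1)$ (an integer, since $(i+1)\mid i(i+1)\mid n$). Since $N=n{n^2-1\choose n-1}$ and $i\mid n$, we have $i\mid N$, so each $C_j$ has size $N/i$; hence for each fixed $j\in[i]$ the image $S_j:=\phi[C_j]$ is a uniformly random $(N/i)$-element subset of ${[n^2]\choose n}$. I would first dispose of the degenerate case $i=1$: there $S_1={[n^2]\choose n}$ is the family of \emph{all} $n$-subsets of $[n^2]$, so (1) is trivial (as ${2n\choose n}\ge1$) and (2) holds because every $(n/2)$-subset of $[n^2]$ lies in ${n^2-n/2\choose n/2}\ge n^2-n/2\ge n/2$ of them. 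So from now on I may assume $i\ge2$, which forces $n\ge i(i+1)\ge6$ and $i\le\sqrt n$; this is what provides enough slack in the estimates below.

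The whole argument rests on a single elementary estimate, which I would record first: for any family $\mathcal{T}\subseteq{[n^2]\choose n}$ and any $j\in[i]$,
\[
\Pr\bigl[\,S_j\cap\mathcal{T}=\emptyset\,\bigr]\;=\;{N-|\mathcal{T}|\choose N/i}\Big/{N\choose N/i}\;\le\;(1-\tfrac1i)^{|\mathcal{T}|}\;\le\;e^{-|\mathcal{T}|/i},
\]
where the middle inequality follows by expanding the ratio of binomial coefficients as a product of $|\mathcal{T}|$ factors, each at most $1-\tfrac1i$ (and the whole expression is $0$, so the bound still holds, once $|\mathcal{T}|$ is too large for the top coefficient to be nonzero).

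With this in hand, both properties become union bounds. For (1): given $j\in[i]$ and $A\in{[n^2]\choose 2n}$, the bad event ``no member of $S_j$ is contained in $A$'' says precisely that $S_j$ misses ${A\choose n}$, so it has probability at most $e^{-{2n\choose n}/i}$; summing over the $i{n^2\choose 2n}$ pairs $(j,A)$ shows that (1) fails with probability at most $i{n^2\choose 2n}\,e^{-{2n\choose n}/i}$. For (2): given $j\in[i]$ and $B\in{[n^2]\choose b}$, set $\mathcal{B}:=\{\,T\in{[n^2]\choose n}:B\subseteq T\,\}$ and $L:=|\mathcal{B}|={n^2-b\choose n-b}$; if fewer than $b$ members of $S_j$ contain $B$, then $\mathcal{B}\setminus S_j$ has at least $L-b+1$ elements, so $S_j$ misses some $(L-b+1)$-element subfamily of $\mathcal{B}$, and a union bound over the ${L\choose b-1}$ choices of this subfamily shows the bad event has probability at most ${L\choose b-1}\,e^{-(L-b+1)/i}\le L^{b}e^{-(L-b)/i}$; summing over the $i{n^2\choose b}$ pairs $(j,B)$ shows that (2) fails with probability at most $i{n^2\choose b}\,L^{b}\,e^{-(L-b)/i}$. (One could instead use a Chernoff bound for the hypergeometric variable $|S_j\cap\mathcal{B}|$ here, but the crude union bound is self-contained and ample.)

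The remaining step — and the one I expect to cost the most work, though it is computational rather than conceptual — is to check that each of the two quantities above is less than $\tfrac12$ for all $i\ge2$ and $n\ge i(i+1)$, so that some $\phi$ works. The mechanism is the standard one: the number of ``bad pairs'' is only $n^{O(n)}$, whereas the per-pair probabilities are super-exponentially small in $n$, since ${2n\choose n}\ge4^n/(2n+1)$ and $L\ge n^{\,n-b}\ge n^{2n/3}$ (using $b\le n/3$). Taking logarithms and feeding in crude bounds (${n^2\choose 2n}\le n^{4n}$, ${n^2\choose b}\le n^{2n/3}$, $L\le n^{2n}$, $i\le\sqrt n$) reduces each inequality to an elementary one in the single variable $n$ — essentially $4^n>15\,n^{5/2}\log n$ for the first and $n^{2n/3}>2\,n^{5/2}\log n$ for the second — and each of these holds at $n=6$ and, since the left-hand side grows far faster, for every $n\ge6$; this finishes the proof.
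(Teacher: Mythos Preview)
Your argument is correct and follows the same strategy as the paper: pick $\phi$ uniformly at random, bound the failure probabilities for (1) and (2) by union bounds over the bad pairs, and verify $p_1+p_2<1$ using the basic estimate $\Pr[S_j\cap\mathcal{T}=\emptyset]\le(1-\tfrac1i)^{|\mathcal T|}$. The only difference worth noting is in the handling of $p_2$: the paper writes out the hypergeometric tail $\sum_{t<b}\binom{N-N/i}{q-t}\binom{N/i}{t}/\binom{N}{q}$ and, after some algebra together with the inequality $q/n\ge p$ (where $p=\binom{2n}{n}$ and $q=L$ in your notation), obtains $p_2\le\big(i\binom{n^2}{2n}(1-\tfrac1i)^p\big)^n$, so that a single check $i\binom{n^2}{2n}(1-\tfrac1i)^p<\tfrac12$ disposes of both $p_1$ and $p_2$ at once; your ``missed subfamily'' union bound is a cleaner shortcut that sidesteps that algebra at the cost of a separate numerical verification for $p_2$.
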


\begin{proof}[Proof of Proposition~\ref{prop-geq-k+1} (using Lemma~\ref{lem:proba}).] 
We proceed by induction on $i$. For $i = 0$, we observe that a $1$-uniform clique of $H_0$ is any monochromatic $n_0$-vertex clique of $H_0$. Such a clique exists because $H_0$ is a complete graph on $(k+1)n_0$ vertices, and our clique-coloring of $G_k$ uses at most $k+1$ colors. 

For the induction step, fix $i \in [k]$, and suppose that $C$ is an $i$-uniform clique of $H_{i-1}$. In particular then, $|C| = n_{i-1} = {n_i^2 \choose n_i}$. We also remind the reader that $(k+1)!$ divides $n_i$, and in particular, $i(i+1)$ divides $n_i$. Since $C$ is $i$-uniform, there is a partition $(C_1,\ldots,C_i)$ of $C$ into $i$ equal-sized monochromatic cliques; we may assume without loss of generality that for each $j \in [i]$, vertices of $C_j$ were colored with color $j$. Let $\phi:C \rightarrow {[n_i^2] \choose n_i}$ be the bijection whose existence is guaranteed by Lemma~\ref{lem:proba}. Let $(C,X)$ be the petal of $H_i$ associated with $\phi$. 

Now, for each $j \in [k+1]$, let $A_j$ be the set of all vertices of $X$ that received the color $j$. Suppose first that for some $j \in [i]$, $|A_j| \geq 2n_i$. Then by Lemma~\ref{lem:proba} (1), we know that there is some $c \in C_j$ such that $N_X(c) \subseteq A_j$, and so the clique $\{c\} \cup N_X(c)$ is monochromatic. But by Proposition~\ref{prop-max-clique}, $\{c\} \cup N_X(c)$ is a maximal clique of $G_k$, and so the fact that this clique is monochromatic contradicts the fact that $G_k$ was clique-colored. This implies that for all $j \in [i]$, $|A_j| \leq 2n_i-1$. Since $|X| = n_i^2$, and since $(A_1,\ldots,A_{k+1})$ forms a partition of $X$, it follows that there exists some index $j \in [k+1] \smallsetminus [i]$ such that $|A_j| \geq \frac{n_i^2-i(2n_i-1)}{k-i+1} \geq \frac{n_i(n_i-2i)}{k-i+1}$; by symmetry, we may assume that $|A_{i+1}| \geq \frac{n_i(n_i-2i)}{k-i+1}$. 

Since $k \geq 2$, $n_i \geq (k+1)!$, and $i \in [k]$, we see that $\frac{n_i-2i}{k-i+1} \geq \frac{1}{i+1}$, and so $|A_{i+1}| \geq \frac{n_i}{i+1}$. Fix $B \subseteq A_{i+1}$ such that $|B| = \frac{n_i}{i+1}$. By Lemma~\ref{lem:proba} (2), we know that for each $j \in [i]$, there are at least $\frac{n_i}{i+1}$ members of $C_j$ that are complete to $B$. For each $j \in [i]$, we let $B_j$ be an $\frac{n_i}{i+1}$-element subset of $C_j$ such that $B_j$ is complete to $B$, and we observe that $B \cup B_1 \cup \ldots \cup B_i$ is an $(i+1)$-uniform clique of $H_i$. This completes the induction. 
\end{proof} 

It now only remains to prove Lemma~\ref{lem:proba}. 

\begin{proof}[Proof of Lemma \ref{lem:proba}]
If $i = 1$, then the result is immediate; so assume that $i \geq 2$. (Note that this implies that either $n = 6$ or $n \geq 12$.) We denote by $p_1$ (resp. $p_2$) the probability that a random bijection $\phi:C \rightarrow {[n^2] \choose n}$ fails to satisfy (1) (resp. (2)) from Lemma~\ref{lem:proba}. We need to show that $p_1+p_2 < 1$. To simplify notation, we set $N:={n^2 \choose n}$. 

We first find an upper bound for $p_1$. Let $A \in {[n^2] \choose 2n}$. The set $A$ has $p:={2n \choose n}$ subsets of size $n$, and so for each $j \in [i]$, the probability that none of these subsets is the image of a vertex of $C_j$ is 
\begin{displaymath} 
\frac{{N-N/i \choose p}}{{N \choose p}} 
\end{displaymath} 
By the union bound over all possible choices of $A$ and all $j \in [i]$, we obtain 
\begin{displaymath} 
\begin{array}{ccccc} 
p_1 & \leq & i{n^2 \choose 2n} \frac{{N-N/i \choose p}}{{N \choose p}} & \leq & i {n^2 \choose 2n} (1-\frac{1}{i})^{p}
\end{array} 
\end{displaymath} 

We next find an upper bound for $p_2$. Let $B \in {[n^2] \choose n/(i+1)}$. There are $q:={n^2-n/(i+1) \choose n-n/(i+1)}$ subsets of size $n$ of $[n^2]$ that include $B$ (as a subset). For each $j \in [i]$, the probability that fewer than $n/(i+1)$ of these subsets are the image of a vertex of $C_j$ is 
\begin{displaymath} 
\sum\limits_{t=0}^{n/(i+1)-1} \frac{   {N-N/i \choose q-t}  {N/i \choose t} }  { {N \choose q} } 
\end{displaymath} 
Again, by the union bound over all possible choices of $B$ and all $j \in [i]$, we get
\begin{displaymath}
\begin{array}{rcl} 
p_2	&\leq& i {n^2 \choose n/(i+1)} \sum\limits_{t=0}^{n/(i+1)-1} \frac{   {N-N/i \choose q-t}  {N/i \choose t} }  { {N \choose q} } \\ \\
	&\leq& iN \sum\limits_{t=0}^{n/(i+1)-1}  \frac{ {N \choose q-t} {N \choose t}}{ {N \choose q}}   (1-\frac{1}{i})^{q-t}  (\frac{1}{i})^t\\ \\
	&\leq& iN(1-\frac{1}{i})^q \sum\limits_{t=0}^{n/(i+1)-1} (qN)^t \\ \\ 
	&\leq& iN(1-\frac{1}{i})^q \frac{n}{i+1}N^{2(\frac{n}{i+1}-1)} \\ \\ 
	&\leq&i N^n (1-\frac{1}{i})^{q}\\ \\
	&\leq& \Big(i{n^2 \choose 2n}(1-\frac{1}{i})^{q/n}\Big)^n 
\end{array}
\end{displaymath} 
Next, we show that $\frac{q}{n} \geq p$. Since $i \geq 2$ and $i(i+1)$ divides $n$, we have that $\frac{2n}{3} \leq n-\frac{n}{i+1} \leq n-2$. We now obtain the following: 
\begin{displaymath} 
\begin{array}{rclll}  
\frac{q}{n}	&=& \frac{1}{n}{n^2-\frac{n}{i+1} \choose n-\frac{n}{i+1}}& & \\ \\
	&\geq& \frac{1}{n} \cdot \frac{(n^2-n+1)^{n-\frac{n}{i+1}}}{(n-\frac{n}{i+1})!}& & \\ \\
	&\geq& \frac{1}{n} \cdot \frac{(n^2-2n+1)^{\frac{2n}{3}}}{(n-2)!} & & \\ \\
	&=& \frac{(n-1)^{\frac{4n}{3}+1}}{n!}& & \\ \\
\end{array}
\end{displaymath} 
If $n = 6$, then we see by direct calculation that $\frac{(n-1)^{\frac{4n}{3}+1}}{n!} \geq p$. Otherwise, we have that $n \geq 12$, in which case 
\begin{displaymath} 
\begin{array}{ccccccccc} 
(n-1)^{4/3} & \geq & (\frac{11}{12}n)^{4/3} & = & (\frac{11^4}{12^4}n)^{1/3}n & \geq & (\frac{11^4}{12^3})^{1/3}n & \geq & 2n
\end{array} 
\end{displaymath} 
and consequently, 
\begin{displaymath} 
\begin{array}{ccccc} 
\frac{(n-1)^{\frac{4n}{3}+1}}{n!} & \geq & \frac{(2n)^n}{n!} & \geq & p 
\end{array} 
\end{displaymath} 
So in either case, we have that $\frac{(n-1)^{\frac{4n}{3}+1}}{n!} \geq p$, and it follows that $\frac{q}{n} \geq p$. Using the fact that $0 < 1-\frac{1}{i} < 1$, we deduce that 
\begin{displaymath} 
\begin{array}{rcl} 
p_2 &\leq& \Big(i{n^2 \choose 2n}(1-\frac{1}{i})^p\Big)^n 
\end{array} 
\end{displaymath} 
and consequently, 
\begin{displaymath} 
\begin{array}{rcl} 
p_1+p_2 &\leq& i{n^2 \choose 2n}(1-\frac{1}{i})^p+\Big(i{n^2 \choose 2n}(1-\frac{1}{i})^p\Big)^n 
\end{array} 
\end{displaymath} 
Thus, in order to complete the proof, we need only show that $i{n^2 \choose 2n}(1-\frac{1}{i})^p < \frac{1}{2}$. 

Since $i(i+1)$ divides $n$, we know that $i \leq \sqrt{n}$. Further, it is well known that the central binomial coefficient ${2n \choose n}$ satisfies the inequality ${2n \choose n} \geq 2^{2n-1}/\sqrt{n}$; thus $p \geq 2^{2n-1}/\sqrt{n}$. Finally, it follows from elementary calculus that $0 < (1-\frac{1}{x})^x < \frac{1}{e}$ for all real numbers $x > 1$; in particular then, $0 < (1-\frac{1}{\sqrt{n}})^{\sqrt{n}} < \frac{1}{2}$. Using all this, we obtain the following: 
\begin{displaymath} 
\begin{array}{rclll} 
i{n^2 \choose 2n}(1-\frac{1}{i})^p & \leq & \sqrt{n}{n^2 \choose 2n}(1-\frac{1}{\sqrt{n}})^{2^{2n-1}/\sqrt{n}}
\\
\\
& \leq & \sqrt{n}\frac{n^{4n}}{(2n)!}(\frac{1}{2})^{2^{2n-1}/n} 
\\
\\
& \leq & n^{4n}(\frac{1}{2})^{2^n} 
\\
\\
& = & 2^{4n\log_2(n)-2^n} 
\end{array} 
\end{displaymath} 
Since $n \geq 6$, we have that $4n\log_2(n)-2^n < -1$, and consequently, $i{n^2 \choose 2n}(1-\frac{1}{i})^p < \frac{1}{2}$. This completes the argument. 
\end{proof}

\end{document}